\newcommand{\abs}[1]{| #1 |}
\newcommand{\Abs}[1]{\left| #1\right|}
\def\C {\mathbb C}
\newcommand{\cl} {\overline}
\newcommand{\dd}{\delta}
\newcommand{\D}{\mathbb D}
\newcommand{\e}{\epsilon}
\newcommand{\N}{\mathbb N}
\newcommand{\norm} [1]{\left\| #1\right\|}
\newcommand{\p}{\partial}
\newcommand{\R}{\mathbb R}
\newcommand{\re}{\text{\rm Re}\,}
\newcommand{\set}[1]{\left\{ #1\right\}}
\def\sm{\setminus}
\newcommand{\To}{\longrightarrow}
\newcommand{\W}{\Omega}
\newcommand{\z}{\zeta}
\newtheorem{theorem}{Theorem}
\newtheorem{prop}[theorem]{Proposition}
\theoremstyle{definition}
\newtheorem{definition}[theorem]{Definition}
\theoremstyle{remark}
\title[Kobayashi metric]
      {Asymptotic Behavior of the Kobayashi Metric in the Normal Direction}
\author{John Erik Forn\ae ss, Lina Lee}
\email{ fornaess@umich.edu, linalee@umich.edu}
\thanks{The first author is supported by an NSF grant.}
\begin{document}

\maketitle

\begin{abstract}
In this paper we construct a pseudoconvex domain in $\C^3$ where the Kobayashi metric does not blow up at a rate of one over distance to the boundary in the normal direction.
  \end{abstract}

  \section{Introduction}

  The asymptotic behaviour of invariant  metrics has been studied by several authors, Graham \cite{Graham}, Catlin \cite{Catlin}, Krantz \cite{Krantz 1992}, Diederich-Herbort \cite{dieher}, Lee \cite{Lina}, and others.

In this paper, we study the behavior of the Kobayashi metric, which is defined as follows:

\begin{definition}
Let $\W\subset\C^n$ be a domain, $Q\in\W$ and $X\in T_Q(\W)$. The Kobayashi metric $F_K:T\W\To\R^+\cup\set 0$ is defined as
$$
F_K(Q,X)=\inf\set{\alpha>0:\exists\phi\in\W(\D),\, \phi(0)=Q,\,\phi'(0)=X/\alpha},
$$
where $A(B)$ denotes the family of holomorphic mappings from $B$ to $A$ and $\D$ is the unit disc in $\C$.
\end{definition}

Let $\W=\set{r<0}\subset\subset\C^n$ be a smoothly bounded domain and $P\in\p\W$. Let $P_\dd=P-\dd\nu$, where $\nu$ is the unit outward normal vector to $\p\W$ at $P$, i.e., $\nu=\nabla r(P)/\norm{\nabla r(P)}$.

We want to estimate $F_K(P_\dd,\nu)$ as $\dd>0$ goes to $0$, i.e., as the point $P_\dd\in\W$ approaches the boundary point $P$ along the normal line to the boundary.

We see that the mapping $\phi(\z)=P_\dd+\z\dd\nu$ lies in $\W$ for all $\z\in\D$ for $\dd>0$ small enough. Hence we see that
$$
F_K(P_\dd,\nu)\le\frac{1}{\dd},
$$
for $\dd>0$ small enough.

The question is whether the other direction is true, i.e., whether there exists some constant $C>0$ such that
\begin{equation}\label{1011}
F_K(P_\dd,\nu)\ge C\frac{1}{\dd}
\end{equation}
for $\dd>0$ small enough.

The answer is ``No''. It is not true in general. Krantz \cite{Krantz 1992} showed that if $P$ is a strongly pseudoconcave point, then one has the following estimate:
$$
F_K(P_\dd,\nu)\approx\frac{1}{\dd^{3/4}}.
$$

Ian Graham \cite{Graham} proved that if $\W$ is a strongly pseudoconvex domain in $\C^n$, $n\ge 2$, then (\ref{1011}) holds. Catlin \cite{Catlin} showed (\ref{1011}) is true  when $\W$ is a finite type pseudoconvex domain in $\C^2$ and Lee \cite{Lina} proved it for convex domains in $\C^n$.

So it has been conjectured that  the estimate (\ref{1011}) must hold for a smoothly bounded pseudoconvex domain in $\C^n$, $n\ge 2$.

In this paper, we give a counter example to the conjecture. We construct a smoothly bounded infinite type pseudoconvex domain in $\C^3$ where we can find sequences $\dd_n\searrow 0$ and $a_n\nearrow\infty$ such that
\begin{equation}\label{1051}
F_K(P_{\dd_n},\nu)\le \frac{1}{a_n\dd_n}, \quad\text{for all $n\in\N$}.
\end{equation}

To construct such a domain, we modify the domain in Krantz \cite{Krantz93}, which is a smoothly bounded pseudoconvex domain in $\C^2$, where there exist sequences $b_n\nearrow\infty$, $c_n\nearrow\infty$ and $\dd_n\searrow 0$ such that
\begin{equation}\label{1053}
F_K(P_{\dd_n}, \nu+b_n T)\le\frac{1}{c_n\dd_n},\quad T\in T_P^\C(\p\W),\,\forall n\in\N.
\end{equation}

We see that the vector $\nu+b_nT$ has a very large tangential component for a large number  $n$. Therefore, as $P_{\dd_n}$ approaches the boundary point $P$, the estimate above gives the estimate for an almost tangential vector, not the normal vector.

In section 2, we give a detailed proof of (\ref{1053}) since the proof is very sketchy in \cite{Krantz93}. In section 3, we modify the example in section 2 and construct a smoothly bounded pseudoconvex domain in $\C^3$, where (\ref{1051}) holds.

\section{Construction of a domain in $\C^2$}

\begin{prop}
For a given increasing sequence $a_n\nearrow\infty$, we can construct a smoothly bounded pseudoconvex domain $\W=\set{r<0}\subset\C^2$ and find a boundary point $P\in\p\W$ and a sequence $\dd_n\searrow 0$ such that
$$
F_K(P_{\dd_n}, X_n)\le\frac{1}{a_n\dd_n},\quad P_{\dd_n}=P-\dd_n\nu,
$$
where $X_n$'s are vectors such that $(X_n,\nu)=1$ and $\nu$ is a unit outward normal vector to $\p\W$ at $P$.
\end{prop}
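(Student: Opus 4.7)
The plan is to construct $\Omega$ by prescribing a local defining function whose boundary contains a sequence of flat complex disks accumulating at $P=0$, and then to exhibit explicit analytic disks through each $P_{\delta_n}$ which witness the required Kobayashi estimate. Near $P=0$ I work with a defining function of the form
\begin{equation*}
r(z_1,z_2) = \text{\rm Re}(z_1) + \psi(z_2,\text{\rm Im}(z_1)),
\end{equation*}
where $\psi$ is smooth, flat to infinite order at $0$, plurisubharmonic in $z_2$ for each fixed $t=\text{\rm Im}(z_1)$, and arranged so that, for a preselected sequence $t_n\searrow 0$ with $R_n\nearrow\infty$ and $\eta_n>0$ chosen fast enough in terms of the prescribed $a_n$, one has $\psi\equiv 0$ on the bidisk-slabs $S_n:=\set{|z_2|<R_n,\,|t-t_n|<\eta_n}$. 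Between slabs $\psi$ is built from plurisubharmonic bumps $c_{n,k}\chi_n(t)|z_2|^{2k_n}$, assembled to keep $\psi$ smooth, plurisubharmonic in $z_2$, and infinitely flat at the origin. The local model is then extended to a smoothly bounded pseudoconvex $\Omega\subset\C^2$ by standard plurisubharmonic patching with a strongly pseudoconvex ambient. A key feature is that $\psi$ is allowed to become sufficiently negative in specific pocket regions, so that $\Omega$ extends a distance of order $a_n\delta_n$ into the half-space $\set{\text{\rm Re}(z_1)>0}$ far from $P$.

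For each $n$ I would then choose $\delta_n$ in terms of the above data and construct an analytic disk $\phi_n\colon\D\to\Omega$ with $\phi_n(0)=P_{\delta_n}$ whose derivative $\phi_n'(0)$ is a complex multiple of a vector $X_n=\nu+b_n T$ with $(X_n,\nu)=1$. A naive affine disk $\phi_n(\z)=(-\delta_n+(A_n+iB_n)\z,\,C_n\z)$, staying inside a single slab $S_n$ with $C_n=b_n(A_n+iB_n)$, would only give $F_K(P_{\delta_n},X_n)\le 1/\sqrt{A_n^2+B_n^2}\lesssim 1/\delta_n$, because the constraint $\text{\rm Re}(\phi_{n,1}(\z))<0$ on $\D$ forces $\sqrt{A_n^2+B_n^2}<\delta_n$. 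The essential trick is to let $\phi_n$ exit $S_n$ and visit one of the engineered pockets: replacing the first coordinate by a conformal map $g\colon\D\to\set{\text{\rm Re}(w)<a_n\delta_n}$ with $g(0)=-\delta_n$ produces $|g'(0)|\approx 2a_n\delta_n$, and a correspondingly rescaled tangential coordinate $h$ keeping the image inside $\Omega$ yields a disk with $\phi_n'(0)\parallel X_n$ and $F_K(P_{\delta_n},X_n)\le 1/|g'(0)|\le 1/(a_n\delta_n)$.

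The hard part is the geometric engineering of $\psi$: arranging the pockets and the transitions between slabs so that (i) the global domain $\Omega$ is smoothly bounded and pseudoconvex, (ii) $\psi$ vanishes to infinite order at $P$ so that the outward unit normal at $P$ really is $\nu=\p/\p\text{\rm Re}(z_1)$, and (iii) the image of the enlarged test disk $\phi_n$ actually lies in $\Omega$, i.e.\ the inequality $\text{\rm Re}(\phi_{n,1}(\z))+\psi(\phi_{n,2}(\z),\text{\rm Im}(\phi_{n,1}(\z)))<0$ holds throughout $\D$ despite the first term becoming positive in the pocket. This forces a careful coordination of the sequences $t_n$, $\eta_n$, $R_n$, $\delta_n$ and the bump parameters $c_{n,k},k_n$ in terms of the given $a_n$. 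Once $\psi$ is set up correctly, the Kobayashi bound is obtained by the explicit conformal-map computation sketched above.
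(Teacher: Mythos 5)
Your plan diverges from the paper's in a way that introduces a genuine gap. You take a defining function $r(z_1,z_2)=\re z_1+\psi(z_2,\im z_1)$ in which $\psi$ depends on $t=\im z_1$, and you only impose that $\psi$ be plurisubharmonic in $z_2$ for each fixed $t$. That condition does \emph{not} give pseudoconvexity of $\set{r<0}$: once $\psi_t\not\equiv 0$, the Levi form of $r$ picks up the terms $\frac14\psi_{tt}\abs{X_1}^2$ and $\re\bigl(-i\,\psi_{t\bar z_2}X_1\cl X_2\bigr)$ in addition to $\psi_{z_2\bar z_2}\abs{X_2}^2$, and these must be controlled on the complex tangent space. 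You never address this, and it is not obvious how to arrange slabs and pockets in $t$ while keeping the Levi form nonnegative. The paper sidesteps the issue entirely by making $\rho$ a function of the tangential variable $z$ alone, so $\W=\set{\re w+\rho(z)<0}$ is rigid and pseudoconvexity reduces to subharmonicity of $\rho$, which is verified directly for each building block $R_n$, preserved under smoothing, and closed under the weighted sum $\sum\dd_k\rho_k$.

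There are also two internal problems with the geometry you describe. First, you say $\psi$ is assembled from nonnegative bumps $c_{n,k}\chi_n(t)\abs{z_2}^{2k_n}$ and vanishes on the slabs, yet you simultaneously need $\psi$ to become strictly negative in ``pockets'' so that $\W$ reaches into $\set{\re z_1>0}$; these two specifications contradict each other as stated. Second, the conformal map $g\colon\D\to\set{\re w<a_n\dd_n}$ with $g(0)=-\dd_n$ has $\im g$ unbounded on $\D$, so its image cannot remain inside a slab $\set{\abs{t-t_n}<\eta_n}$ in the $\im z_1$ direction; the slab picture and the half-plane map are incompatible. (The factor gained from $g$ is also only about $2$ over an affine map, so it is an overcomplication in any case.) The paper's solution is simpler on both counts: the ``pocket'' lives in the tangential $z$-plane, where $\rho$ is a genuine subharmonic function that is forced to be very negative near $\re z=r_n$ by the inequality $\rho(z)<\dd_n-a_n\frac{\dd_n}{r_n}\re z$ for $\abs z\le r_n$; and the test disk is the plain affine map $\phi(\z)=(r_n\z,\,-\dd_n+a_n\dd_n\z)$, whose $w$-component does reach into $\re w>0$ but stays in $\W$ precisely because of that inequality. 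If you keep the half-plane-map idea, you still need to (i) decide where the negativity of $\psi$ comes from, (ii) verify Levi pseudoconvexity with $\im z_1$-dependence, and (iii) reconcile the unbounded $\im g$ with your slab decomposition; none of these are addressed, and together they constitute the real content of the construction.
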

\begin{proof}
Let $\W\subset\C^2$ be a pseudoconvex domain defined as follows:
$$
\W=\set{(z,w)\in\C^2: r(z,w)=\re w+\rho(z)<0}\cap B(0,2),
$$
where $\rho(z)$ vanishes to infinite order at $0$ and satisfies the following:
\begin{equation}\label{943}
\rho(z)<\dd_n-a_n\frac{\dd_n}{r_n}\re z,\quad\forall\abs z\le r_n,\, r_n\searrow 0.
\end{equation}
Then the analytic disc
$$
\phi(\z)=\left(r_n \z, -\dd_n +a_n\dd_n \z\right),\quad \z\in\D
$$
lies inside $\W$ since, by (\ref{943}), we have
\begin{multline}
r(\phi(\z))=-\dd_n+a_n\dd_n\re \z+\rho(r_n \z)\\
<-\dd_n+a_n\dd_n\re \z+\dd_n -a_n\dd_n\re \z=0,\quad\forall \z\in\D.
\end{multline}
Hence, letting $P=0$, $P_{\dd_n}=P-\dd_n\nu=(0,-\dd_n)$ and
$$
X_n=\frac{r_n}{a_n\dd_n}\frac{\p}{\p z}+\frac{\p}{\p w},
$$
we have
$$
\phi'(0)=r_n\frac{\p}{\p z}+a_n\dd_n\frac{\p}{\p w}=a_n\dd_nX_n.
$$
Therefore we get
$$
F_K(P_{\dd_n}, X_n)\le \frac{1}{a_n\dd_n}.
$$
\noindent{\bf Construction of $\rho$.}

\medskip

Choose $r_n$'s such that
\begin{equation}\label{1124}
r_n=a_n^{-1}r_{n-1}^2,\quad r_n\le\frac{1}{4}
\end{equation}
and let
$$
u_n(z)=\frac{1}{8}-\re z+\frac{\log\abs z}{4\log a_n}.
$$
Define a function $R_n(z)$ as follows:
\begin{equation}\label{613}
R_n(z)=
\begin{cases}
\max\set{u_n(z), 0},& \re z\le   b_n\\
u_n(z), & \re z>b_n,
\end{cases}
\end{equation}
where $0< b_n\le 1$ is the smallest positive number such that
$$
\frac{1}{8}-b_n+\frac{\log b_n}{4\log a_n}=0.
$$
 \begin{figure}[h!]
    \centerline{\includegraphics[width=2.5in]{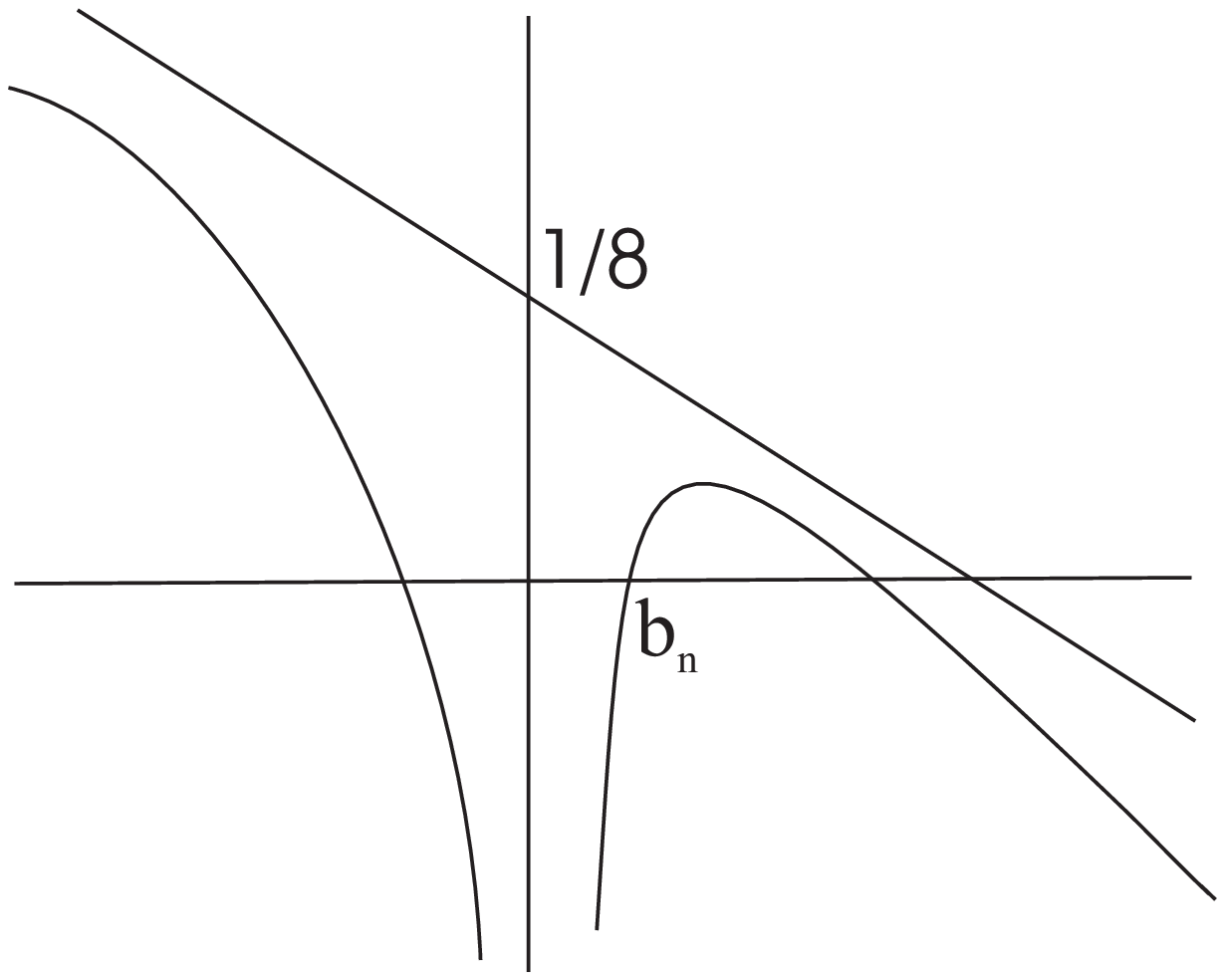}}
    \end{figure}
Such $b_n$ exists for large enough $a_n$ and we may assume $a_1$ is large enough. Note that $b_n\searrow 0$ and that $b_n\le 1/8$.

 We show that  $R_n(z)$ is subharmonic on $\C$. From (\ref{613}), we see that the function $R_n(z)$ is subharmonic on $\C\setminus\set{\re z=b_n}$. We can check that that $R_n$ is continuous near $\set{\re z=b_n}$. On the line $\re z=b_n$, we have that $\abs z\ge b_n$ and that
$$
\frac{1}{8}-\re z+\frac{\log\abs z}{4\log a_n}\ge \frac{1}{8}-b_n+\frac{\log b_n}{4\log a_n}=0,\quad \re z=b_n.
$$
Hence we conclude that $R_n$ is continuous and hence $R_n(z)$ is subharmonic on $\C$.

Calculating $u_n(z)$ for $\abs z<2r_n$, we can prove that $R_n(z)\equiv 0$ for all $\abs z<2r_n$. 

If $\abs z<2 r_n$, ($2r_n\le b_n$) then we get
\begin{multline}
u_n(z)=\frac{1}{8}-\re z+\frac{\log\abs z}{4\log a_n}\le \frac{1}{8}+2a_n^{-1}r_{n-1}^2+\frac{\log 2a_n^{-1}r_{n-1}^2}{4\log a_n}\\
\le\frac{1}{8}+2a_n^{-1}+\frac{-\log a_n+\log 2r_{n-1}^2}{4\log a_n}
\le -\frac{1}{8}+2a_n^{-1}<0.
\end{multline}
Therefore we check that
\begin{equation}\label{938}
R_n(z)\equiv 0,\quad\forall \abs z<2 r_n.
\end{equation}

Also we can easily check that $R_n(z)\le 3/8 -\re z$ for all $\abs z<a_n$. Calculate $u_n(z)$ for $\abs z<a_n$:
$$
u_n(z)=\frac{1}{8}-\re z+\frac{\log\abs z}{4\log a_n}\le \frac{3}{8}-\re z,\quad\forall\abs z<a_n
$$
Since $b_n$ is a small number, we have that 
$$
0\le \frac{3}{8}-\re z,\quad \re z\le b_n.
$$
Hence, from (\ref{613}),  we have
\begin{equation}\label{209}
R_n(z)\le \frac{3}{8}-\re z,\quad\forall \abs z< a_n.
\end{equation}

Now we make $R_n$ smooth using convolution with a smooth function. Choose a nonconstant $C^\infty$ function $\chi:\C\to\R^+\cup\set{0}$ such that
$$
0\le\chi\le 1,\quad \chi(z)=\chi(\abs z),\quad \chi(z)\equiv 0,\;\text{if }\abs z\ge 1
$$
and let
$$
m=\int_\C\chi(z) dxdy.
$$
Let $ d\mu=dx dy/m$.
We define the function $\tilde R_n(z)$ as follows
\begin{equation}\label{235}
\tilde R_n(z)=\int_\C R_n(z-\e_n w)\chi(w) d\mu(w),\quad \e_n< \frac{r_n}{2}.
\end{equation}
From (\ref{938}), we have that
$$
R_n(z-\e_n w)=0,\quad\forall \abs z<r_n, \;\abs w<1.
$$
Hence 
\begin{equation}\label{1000}
\tilde R_n(z)\equiv 0,\quad \abs z<r_n.
\end{equation}

Note that
\begin{equation}\label{947}
\abs{u_n(z-\e_n w)-u_n(z)}\le \e_n+\frac{\log\left(1+\frac{\e_n}{r_n}\right)}{4\log a_n},\quad\abs z\ge r_n.
\end{equation}
Therefore, for $\abs z\ge r_n$, from (\ref{947}) we have 
\begin{equation}\label{948}
\abs{R_n(z-\e_n w)-R_n(z)}\le \frac{1}{8}
\end{equation}
uniformly in $ \abs w\le 1$  and $ \abs z\ge r_n$ for small enough $\e_n$.
Together with 
$$
R_n(z)=\tilde R_n(z)=0,\quad\forall\abs z<r_n,
$$
we get 
\begin{equation}\label{957}
0\le \tilde R_n(z)-R_n(z)\le \frac{1}{8},\quad\forall z\in\C.
\end{equation}
The lower estimate $\tilde R_n(z)\ge R_n(z)$ holds by subharmonicity of $R_n$.

Therefore, from (\ref{209}) and (\ref{957}), we have
\begin{equation}\label{958}
\tilde R_n(z)\le\frac{1}{2}-\re z,\quad\forall\abs z<a_n.
\end{equation}

Let
$$
\rho_n(z)= \tilde R_n\left(\frac{a_nz}{r_n}\right).
$$
Then, from (\ref{958}),  we have
\begin{equation}\label{1032}
\rho_n(z)\le \frac{1}{2}-\frac{a_n}{r_n}\re z,\quad\forall \abs z<r_n
\end{equation}
and, from (\ref{957}), 
$$
\Abs{\tilde R_n\left(\frac{a_n z}{r_n}\right)-R_n\left(\frac{a_n z}{r_n}\right)}\le \frac{1}{8},\quad\forall z\in\C.
$$

By (\ref{1000}), we have 
\begin{equation}\label{1015}
\rho_n(z)=\tilde R_n\left(\frac{a_n z}{r_n}\right)\equiv 0,\quad\forall\abs z<r_n^2 a_n^{-1}.
\end{equation} 

 Also, from (\ref{1124}), we get
\begin{equation}\label{1016}
r_{n+1}=a_{n+1}^{-1}r_n^2< r_n^2 a_n^{-1}.
\end{equation}
Therefore, from (\ref{1015}) and (\ref{1016}), we have
\begin{equation}\label{1028}
\rho_n(z)\equiv 0,\quad\forall\abs z<r_{n+1}.
\end{equation}

We define $\rho(z)$ as follows:
\begin{equation}\label{1114}
\rho(z)=\sum_{k=1}^\infty \dd_k\rho_k(z),
\end{equation}
where $\dd_k$'s are positive numbers that will be chosen later. 
Since $r_n\searrow 0$, from (\ref{1028}), we see that
$$
\rho(z)=\sum_{k=n}^\infty \dd_k\rho_k(z),\quad\forall \abs z< r_n.
$$
 Therefore, from (\ref{1032}), we have
\begin{equation}\label{1055}
\rho(z)=\sum_{k=n}^\infty\dd_k \rho_k(z)\le \frac{\dd_n}{2}-a_n\frac{\dd_n}{r_n}\re z+\sum_{k=n+1}^\infty \dd_k\rho_k(z),\quad\abs z<r_n.
\end{equation}
Now we evaluate $\rho_k(z)$ for $\abs z< r_n$, $n<k$.
\begin{multline}
\rho_k(z)=\tilde R_k\left(\frac{a_k z}{r_k}\right)
\le R_k\left(\frac{a_k z}{r_k}\right)+\frac{1}{8}
\le\Abs{u_k\left(\frac{a_k z}{r_k}\right)}+\frac{1}{8}\\
\le\frac{1}{4}+\frac{a_k}{r_k}r_n+\frac{\log\abs{a_kr_n/r_k}}{4\log a_k}
\le\frac{1}{4}+\frac{a_k}{r_k}+\frac{\log a_k+\log (r_n/r_k)}{4\log a_k}\\
\le\frac{1}{2}+\frac{a_k}{r_k}+\frac{\log (1/r_k)}{4\log a_k},\quad\abs z<r_n
\end{multline}
Let
$$
A_k=\frac{1}{2}+\frac{a_k}{r_k}+\frac{\log (1/r_k)}{4\log a_k}
$$
and choose $\dd_k$'s as follows:
$$
\dd_{k}\le\frac{\dd_{k-1}}{A_k}\frac{1}{2^{k}},\quad\dd_k<1.
$$
Then we see that $\dd_k\searrow 0$ and
$$
\dd_k\rho_k(z)\le \dd_k A_k\le\dd_{k-1}\frac{1}{2^k}\le \dd_n\frac{1}{2^k},\quad \abs z<r_n, \;k>n.
$$
Hence we have
\begin{equation}\label{1056}
\sum_{k=n+1}^\infty \dd_k\rho_k(z)\le \dd_n\sum_{k=n+1}^\infty \frac{1}{2^k}\le\frac{\dd_n}{2},\quad \abs z<r_n.
\end{equation}
Therefore, from (\ref{1055}) and (\ref{1056}),  we get
\begin{equation}\label{1128}
\rho(z)<\dd_n-a_n\frac{\dd_n}{r_n}\re z,\quad\forall \abs z<r_n.
\end{equation}
Note that we can choose $\dd_k$'s small enough that $\rho$ is smooth.
\end{proof}

\section{Construction of a domain in $\C^3$}

\begin{theorem}
For a given increasing sequence $a_n\nearrow\infty$, we can construct a smoothly bounded infinite type pseudoconvex domain $\W\subset\subset\C^3$  and find a sequence $\dd_n\searrow 0$ such that with a suitable point $P\in\p\W$, one has
$$
F_K(P_{\dd_n},\nu)\le\frac{1}{a_n\dd_n},\quad P_{\dd_n}=P-\dd_n\nu,
$$
where $\nu$ is the unit outward normal vector to $\p\W$ at $P\in\p\W$.
\end{theorem}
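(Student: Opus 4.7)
The plan is to modify the $\C^2$ construction of Proposition~2.1 by adding one complex dimension. I set
$$
\W = \{(z_1,z_2,w)\in\C^3 : \re w + \sigma(z_1,z_2) < 0\} \cap B(0,2),
$$
where $\sigma : \C^2 \to \R$ is a smooth plurisubharmonic function vanishing to infinite order at $(0,0)$, constructed as a two-variable analog of the function $\rho$ in section~2. Then $\W$ is smoothly bounded and pseudoconvex of infinite type at $P := (0,0,0)\in\p\W$, with unit outward normal $\nu = (0,0,1)$, so that $P_{\dd_n} = (0,0,-\dd_n)$. The estimate $F_K(P_{\dd_n},\nu) \le 1/(a_n\dd_n)$ then reduces, exactly as in Proposition~2.1, to the construction for each $n$ of an analytic disc $\phi_n:\D\to\W$ with $\phi_n(0) = P_{\dd_n}$ and $\phi_n'(0) = a_n\dd_n\,\nu$.

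Because $\phi_n'(0) = (0,0,a_n\dd_n)$ has no tangential component, the first two components of $\phi_n$ must vanish to order at least two at $\z=0$. I look for discs of the form
$$
\phi_n(\z) = \bigl(f_n(\z),\, g_n(\z),\, -\dd_n + a_n\dd_n\z\bigr),
$$
with $f_n(0)=f_n'(0) = g_n(0) = g_n'(0) = 0$. The condition $\phi_n(\D)\subset\W$ becomes
\begin{equation}\label{mainineq}
\sigma\bigl(f_n(\z), g_n(\z)\bigr) < \dd_n\bigl(1 - a_n\re\z\bigr) \qquad \forall\,\z\in\D,
\end{equation}
the direct $\C^3$ analog of (\ref{943}). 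The new feature in $\C^3$ is that the complex tangent space at $P$ is two-dimensional, so the image curve $(f_n, g_n)(\D)\subset\C^2$ can be chosen non-flat, e.g.\ a cusp
$$
f_n(\z) = r_n\z^2,\qquad g_n(\z) = s_n\z^3,
$$
for parameters $r_n, s_n$ adapted to the scale of the $n$-th level of the construction.

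The function $\sigma$ itself is defined as a convergent series $\sigma = \sum_{k=1}^\infty \dd_k\,\sigma_k$, with each $\sigma_k$ psh on $\C^2$ and concentrated at scale comparable to $r_k$ so that the different scales do not interact, exactly as in (\ref{1028})--(\ref{1114}). Each $\sigma_k$ is built from a one-variable Krantz-type block (of the form (\ref{613})) by a holomorphic pull-back to $\C^2$ adapted to the $k$-th cusp, followed by the convolution smoothing of (\ref{235}). With the weights $\dd_k$ chosen rapidly decreasing in the spirit of (\ref{1055})--(\ref{1128}), the cross-terms $\sum_{j\ne k}\dd_j\sigma_j$ along the $k$-th disc are negligible compared with $\dd_k$, and the full $\sigma$ is smooth and psh. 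The bound \eqref{mainineq} for the $n$-th disc then follows from the dominant contribution $\dd_n\sigma_n$.

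The main obstacle is the plurisubharmonic extension step in the construction of each $\sigma_k$: a subharmonic function on $\C$ does not in general extend to a psh function on $\C^2$ with prescribed values on a singular analytic curve. The non-flatness of the cusp is essential here---if $(f_n, g_n)(\D)$ lay in a complex line, then $\sigma\circ(f_n,g_n)$ would reduce to a subharmonic function of $\z^2$ and, applied at both $\z$ and $-\z$, would contradict the sub-mean-value property of subharmonic functions under the Krantz bound \eqref{mainineq}. Once the psh extension is in hand, pseudoconvexity of $\W$, smoothness of $\p\W$, infinite type at $P$, and the target bound $F_K(P_{\dd_n},\nu)\le 1/(a_n\dd_n)$ follow as in Proposition~2.1.
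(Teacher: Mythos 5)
Your outline matches the paper's strategy: use a cuspidal analytic disc (here the paper takes $\phi(\z)=(r_n^3\z^3,r_n^2\z^2,-\dd_n+a_n\dd_n\z)$, landing in the variety $V=\{s^2-t^3=0\}$) so that $\phi'(0)$ is purely normal, pull the one-variable profile $\rho_n$ from section~2 back to the cusp, and assemble $\tilde\rho=\sum_k\dd_k\sigma_k$ over scales as in (\ref{1055})--(\ref{1128}). You also correctly flag where the weight of the argument lies: getting each $\sigma_k$ plurisubharmonic on $\C^2$ while it agrees with $\rho_k$ along the cusp.

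But you leave that step unresolved; ``once the psh extension is in hand'' is exactly what has to be produced, and it is the new content of the $\C^3$ construction. A ``holomorphic pull-back'' of $R_k$ to $\C^2$ does not exist ($\z\mapsto(\z^3,\z^2)$ is not invertible holomorphically near $0$), and convolution smoothing in $\C^2$ preserves plurisubharmonicity but cannot create it: the naive extension $\tilde\rho_n(z)=\rho_n(\pi(z))$ via the projection $\pi$ onto $V$, after a cutoff $\chi_n$ supported near $V$, is only bounded below in its complex Hessian, $\p\cl\p(\tilde\rho_n\chi_n)(L,\cl L)\ge -C_n\|L\|^2$, and is genuinely non-psh on the transition annulus $A_n=\{d_n^2/2\le|z-\pi(z)|^2\le d_n^2\}$. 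The paper's key device, which your proposal is missing, is to add a corrector $q(z)=e^{|z|^2}|s^2-t^3|^2$: since $\log q$ is psh, $q$ is strictly plurisubharmonic off $V$, yet $q\equiv 0$ on $V$ so the values along the cusp are untouched; choosing $K_n$ with $K_nc_n\ge C_n$ (where $c_n$ is the strict psh constant of $q$ on $A_n\cap B(0,2)$) makes $r_n=\tilde\rho_n\chi_n+K_nq$ psh while $r_n(\z^3,\z^2)=\rho_n(\z)$ still holds, so the disc estimate and the cross-term bookkeeping go through verbatim. Without some such corrector, your $\sigma_k$ need not be psh and $\W$ need not be pseudoconvex, so the proof as written has a gap. (Your aside about a complex line forcing a sub-mean-value contradiction is a reasonable heuristic for why a singular curve is needed, but it is not part of the proof and does not substitute for the missing extension step.)
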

\begin{proof}
Let $\W\subset\C^3$ be a pseudoconvex domain defined as follows:
$$
\W=\set{(s,t,w)\in\C^3: r(s,t,w)=\re w+\tilde\rho(s,t)<0}\cap B(0,2).
$$

We construct  $\tilde\rho(s,t)$ such that for a given sequence $a_n\nearrow\infty$, $a_n\ge 4$,  we can find a sequence $\dd_n\searrow 0$ and $r_n\searrow 0$ such that  $\tilde\rho(s,t)$ satisfies
$$
\tilde\rho(\z^3,\z^2)<\dd_n-a_n\frac{\dd_n}{r_n}\re \z,\quad\forall\abs \z\le r_n.
$$
Then the analytic disc
$$
\phi(\z)=(r_n^3 \z^3, r_n^2 \z^2, -\dd_n +a_n\dd_n \z),\quad \z\in\D
$$
lies inside $\W$ since
\begin{multline}
r(\phi(\z))=-\dd_n+a_n\dd_n\re \z+\tilde\rho(r_n^3 \z^3,r_n^2\z^2)\\
<-\dd_n+a_n\dd_n\re \z+\dd_n -a_n\dd_n\re \z=0,\quad\forall \z\in\D.
\end{multline}
Hence, letting $P=0$ and $P_{\dd_n}=P-\dd_n\nu=(0,-\dd_n)$, we have
$$
\phi(0)=P_{\dd_n},\quad\phi'(0)= a_n\dd_n\frac{\p}{\p w}.
$$
Therefore we get
$$
F_K(P_{\dd_n}, \nu)\le \frac{1}{a_n\dd_n}.
$$
\noindent{\bf Construction of $\tilde\rho$}
\medskip

We modify the function $\rho$ we defined in section 2: Refer (\ref{1114}), (\ref{1028}), and (\ref{1128}).

Here we restate  the definition and the properties of $\rho$:
$$
\rho(\z)=\sum_{j=1}^\infty \dd_j\rho_j(\z),
$$
where $\rho_j(\z)$ satisfies
\begin{gather}
\rho_j(\z)\equiv 0,\quad \abs \z<r_{j+1}\\
\rho_j(\z)\le \frac{1}{2}-\frac{a_j}{r_j}\re \z,\quad\forall \abs \z<r_j.
\end{gather}

Let $z=(s,t)\in\C^2$ and $V=\set{s^2-t^3=0}\subset\C^2$. We define 
$$
\tilde\rho_n(s,t)=\rho_n(s/t)=\rho_n(\z),\quad \text{if } (s,t)=(\z^3,\z^2)\in V.
$$
 Now we extend $\tilde\rho_n$ to $\C^2$.

Let $\tilde r_n=r_{n+1}^3$ and $B_n=B(0,\tilde r_n)\subset\C^2$. If $(s,t)\in B_n\cap V$, then $(s,t)=(\z^3,\z^2)$ and $\abs{\z^3}\le r_{n+1}^3$ and $\abs{\z^2}\le r_{n+1}^3$. Hence $\abs\z\le r_{n+1}$ ($r_{n+1}<1$). So we know that
$$
\tilde\rho_n(s,t)=0,\quad \forall(s,t)\in V\cap B_n.
$$
We let
$$
\tilde\rho_n(s,t)\equiv 0,\quad\forall (s,t)\in B_n.
$$
Let $B_n'=B(0,3\tilde r_n/4)\subset B_n$ and choose a small neighborhood $U_n$ of $V$ such that the projection $\pi:U_n\To V$ is well defined on $U_n\setminus B_n'$ and that  $U_n\setminus B_n'=\set{p\in\C^2:\abs{p-\pi(p)}<d_n}$ for suitable small positive numbers $d_n$.

 \begin{figure}[h!]
    \centerline{\includegraphics[width=2.5in]{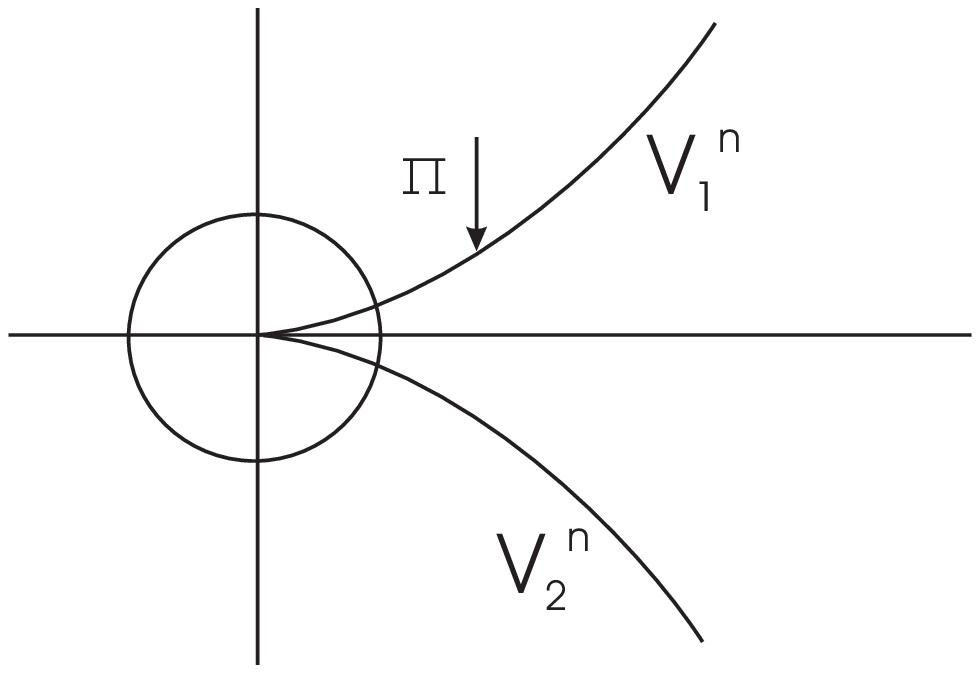}}
    \end{figure}
We define the projection $\pi$ more precisely. Let $V_1$ and $V_2$ be the two sheets of $V$: $V_1=\set{(r^{3/2}e^{i(3\theta/2+\pi)},re^{i\theta}): r,\theta\in\R}$, $V_2=\set{(r^{3/2}e^{i(3\theta/2)},re^{i\theta}):r,\theta\in\R}$ and let $V_1^n=V_1\setminus B_n'$ and $V_2^n=V_2\setminus B_n'$.  We consider a biholomorphic mapping
$$
\Phi:(s,t)\To (s, t-s^{2/3})
$$
in a small neighborhood $U_n$  of $V_1^n\cup V_2^n$. Then $\Phi(z)=(s,0)$ if $z\in V_1^n\cup V_2^n$. We define the projection $\pi$  as follows:
$$
\pi(z)=\Phi^{-1}(\pi_1(\Phi(z))),
$$
where $\pi_1(s,t)=(t,0)$.

We define $\tilde\rho_n$ on $U_n\sm B_n$ as follows:
$$
\tilde\rho_n(z)=\rho_n(\pi(z)),\quad z\in U_n\sm B_n.
$$

Then the function $\tilde\rho_n$ is well defined on $B_n\cup U_n$.
Now we extend $\tilde\rho_n$  to $\C^2$
Choose a smooth function $h:\R\To[0,1]$ such that
$$
h(x)=
\begin{cases}
0,& x\in[0,(3/4)^2]\\
1,& x\ge 1
\end{cases}
$$
Let $\chi:\R\To[0,1]$ be a smooth function defined as follows:
$$
\chi(x)=
\begin{cases}
1,& x\in[0,1/2]\\
0,& x\ge 1
\end{cases}
$$
and let 
$$
\chi_n(z)=\chi\left(h\left(\frac{\abs z^2}{\tilde r_n^2}\right)\frac{\abs{z-\pi(z)}^2}{d_n^2}\right).
$$
Then $\chi_n$ satisfies
$$
\chi_n(z)=
\begin{cases}
\chi\left(\frac{\abs{z-\pi(z)}^2}{d_n^2}\right),& z\in \C^2\sm B_n\\
\chi\left(h\left(\frac{\abs z^2}{\tilde r_n^2}\right)\frac{\abs{z-\pi(z)}^2}{d_n^2}\right),& z\in  B_n\sm B_n'\\
1,& z\in B_n'
\end{cases}
$$
Consider the function $\tilde\rho_n\chi_n$. Then $\tilde\rho_n\chi_n$ is a smooth extension of $\tilde\rho_n$ on $\C^2$ and it satisfies the following:
$$
\tilde\rho_n\chi_n(z)=
\begin{cases}
0,& z\in B_n\\
\tilde\rho_n,& z\in U\sm B_n,\; \abs{z-\pi(z)}^2\le \frac{d_n^2}{2}\\
\tilde\rho_n\chi\left(\frac{\abs{z-\pi(z)}^2}{d_n^2}\right), &z\in U\sm B_n,\;\frac{d_n^2}{2}\le\abs{z-\pi(z)}^2\le d_n^2\\
0,& z\not\in U_n\cup B_n
\end{cases}
$$

Let
$$
p_n(z)=\tilde\rho_n\chi_n(z).
$$
We can find $C_n\ge 0$ such that
$$
\p\cl\p p_n(z)(L,\cl L)\ge -C_n\norm L^2,\quad \forall z\in B(0,2).
$$
Now we add a strictly plurisubharmonic function to make it plurisubharmonic.
Note that $p_n$ is plurisubharmonic everywhere except on 

$$
A_n=\set{z\in U_n\sm B_n:\frac{d_n^2}{2}\le\abs{z-\pi(z)}^2\le d_n^2}.
$$
Let
$$
q(z)= e^{\abs z^2}\abs{s^2-t^3}^2,\quad z=(s,t).
$$

Then $q(z)$ is strictly plurisubharmonic outside a small neighborhood of $V$ since $\log q$ is strictly plurisubharmonic.
Hence  we can find a number $c_n>0$ such that
$$
\p\cl\p q(z)(L,\cl L)\ge c_n\norm L^2,\quad\forall z\in A_n\cap B(0,2)
$$
Choose $K_n>0$ large enough that
$$
-C_n+K_n c_n\ge 0
$$
Then $p_n+K_nq$ is plurisubharmonic on $A_n\cup D_n$ and hence in $\C^2$. Let
$$
r_n=p_n+K_n q.
$$
Then
$$
r_n(\z^3,\z^2)=p_n(\z^3,\z^2)=\tilde\rho_n(\z)=\rho_n(\z).
$$
Hence
$$
r_n(\z^3,\z^2)=
\begin{cases}
0,&\abs \z<r_{n+1}\\
\le\frac{1}{2}-\frac{a_n}{r_n}\re \z,&\abs\z<r_n.
\end{cases}
$$

Let
$$
\tilde\rho(z)=\sum_{j} \dd_j r_j,
$$
and choose $\dd_j$'s small enough that $\tilde\rho$ is smooth in a small neighborhood of $0$.
\end{proof}

\smallskip

\noindent John Erik Fornaess\\
Mathematics Department\\
The University of Michigan\\
East Hall, Ann Arbor, MI 48109\\
USA\\
fornaess@umich.edu\\

\smallskip
\noindent Lina Lee\\
Mathematics Department\\
The University of Michigan\\
East Hall, Ann Arbor, MI 48109\\
USA\\
linalee@umich.edu\\

\begin{thebibliography}{9999}



\bibitem{Catlin} Catlin, D. {\em Estimates of invariant metrics on pseudoconvex domains of dimension two,}  Math. Z.  200  (1989),  no. 3, 429--466.

\bibitem{dieher} Diederich, K., Herbort G. {\em The Bergman metric in the normal direction: a counter example} Michigan Math. J. 47 (2000), 515--528

\bibitem{Graham} Graham, I. {\em Boundary behavior of the Carath\'eodory and Kobayashi metrics on strongly pseudocovnex domains in $\C^n$ with smooth boundary,} Trans. Am. Math. Soc. 207 (1975), 219--240





\bibitem{Krantz 1992} Krantz, S. {\em The boundary behavior of the Kobayashi metric,}  Rocky Mountain J. Math.  22  (1992),  no. 1, 227--233.

\bibitem{Krantz93} Krantz, S. {\em Geometric analysis and function spaces,} CBMS Regional Conf. Ser. in Math., 81, Amer. Math. Soc., Providence, RI, 1993

\bibitem{KrantzSCV} Krantz, S. G. {\em Function theory of several complex variables,} AMS Chelsea Publishing (2000)



\bibitem{Lina} Lee, L. {\em Asymptotic behavior of invariant metrics,} thesis, Washington Univ. in St. Louis (2007)



\end{thebibliography}
\end{document}